\documentclass[12pt]{article}
\usepackage[utf8]{inputenc}
\usepackage[OT2, T1]{fontenc}
\setlength{\oddsidemargin}{1.0mm}
\setlength{\evensidemargin}{1.0mm}

\usepackage{amssymb,amsmath}
\usepackage{epsfig}
\usepackage{enumerate}
\usepackage{titlesec}
\usepackage{titletoc}
\usepackage{amsthm}
\usepackage{stmaryrd}
\usepackage{enumitem}
\usepackage{graphicx}
\usepackage{semantic}
\usepackage{amscd}
\usepackage{amsmath}
\usepackage{amsfonts}
\usepackage{mathrsfs}
\usepackage{amssymb}
\usepackage{verbatim}
\usepackage{color}
\usepackage[linktoc=page, colorlinks=true, allcolors=blue]{hyperref}
\usepackage{mathtools}
\usepackage{braket}
\usepackage{multicol}
\usepackage{url}
\pagestyle{plain}
\usepackage{etoolbox}
\usepackage{geometry}
\usepackage{verbatim}
\usepackage{anysize}

\usepackage{tikz}\usepackage{pgfplots}
\pgfplotsset{compat=1.15}
\usepackage{mathrsfs}
\usetikzlibrary{arrows}
\usepackage{graphicx}
\marginsize{1.7cm}{1.7cm}{0.5cm}{1.0cm}
\footskip = 0.7cm
\usepackage[linewidth=1pt]{mdframed}
\theoremstyle{plain}

\theoremstyle{definition}
\newtheorem{lis}{List}[section]

\newtheorem{remark}{Remark}[section]
\theoremstyle{definition}
\newtheorem{al}{Algorithm}[section]
\newtheorem{definition}{Definition}[section]
\newtheorem{example}{Example}[section]

\newtheorem{proposition}{Proposition}[section]

\usepackage{array}
\usepackage[a]{esvect}
\def\t{\hbox{\scalebox{0.75}{$\triangle$}}}

\def\S{S_{\triangle}}

\def\a{a}

\def\C{C_{\triangle}}

\def\S{S_{\triangle}}

\usepackage{hyperref,xcolor}
\hypersetup{
    colorlinks=true,
    linkcolor=blue,
    urlcolor=blue,
    }

\begin{document}

\title{Automated Generation of Triangle Geometry Theorems}

\author{Alexander Skutin\thanks{This work was supported by the Ministry of Education and Science of the Russian Federation as part of the
program of the Moscow Center for Fundamental and Applied Mathematics under the agreement no. 075-15-2022-284, by the scholarship of Theoretical Physics and Mathematics Advancement Foundation “BASIS” (grant No 21-8-3-2-1) and by the Russian Science
Foundation, project no. 22-11-00075.}}

\date{}
\maketitle
\begin{abstract}
     In this article, we introduce an algorithm for automatic generation and categorization of triangle geometry theorems.
\end{abstract}

\section{Introduction}\label{sc3}

Plane geometry is a vast field of research where many theorems had been obtained and new results are still being discovered. Over the past few decades, a lot of effort has been spent on creating algorithms designed to automatically generate theorems in plane geometry, some of which can be found in \cite{1, 2, 8, 3, 4}.

In this paper, we concretize the problem of automatic generation of plane geometry theorems for the case of triangle geometry theorems, that is, triangle $ABC$ theorems that are invariant with respect to permutations of $ABC$ vertices. We provide a new algorithm that generates and categorize triangle geometry theorems. It is expected that this algorithm is able to generate almost all of the theorems from the articles \cite{cos, cos1}. The main idea of our algorithm can be described as follows:\\The algorithm has inductive form and at each new step $t$
\begin{enumerate}

\item it considers a set of theorems obtained on the previous step and constructs a new set of theorems by adding at most one new object to each already existed theorem and formulating new theorems about the resulting configurations,
\item it replaces the set of obtained theorems with some of its ``maximal generalizations''.

\end{enumerate}

The definition of ``maximally general'' (complete) sets of theorems will be presented in this article.

\subsection{Notation}\label{circ}

The arity $\text{ar}(f)$ of a function $f$ is the number of variables acting in $f$. Further, by $\wedge, \Rightarrow, \Leftrightarrow$ we will denote the logical operators `and', `implies' and `equivalent'. We will use the standard set-theory notation $\{x \:\vert\: \text{statement about x}\}$ which is read as, ``the set of all x such that the statement about x is true.''

\subsection{Structure of the paper} The paper is organized as follows. In Sections 2, 3 we introduce $\t$-objects and define the set $S_{\triangle}^{7}$. In Section 4, we develop an algorithm for automatic generation of triangle geometry theorems based on $S_{\triangle}^{7}$. Section 5 contains some propositions that simplify the computation of $S_{\triangle}^{7}$. Appendix A contains lists of objects which are used in the article.

\subsection{Triangle centers and lines}
\begin{definition}[C. Kimberling, \cite{ki}]\label{d5}
By a {\em triangle center} $ X$ denote a point $ X(A, B, C)$, which is defined for each tuple of points $A$, $B$, $C$ on the plane $\mathbb{R}^2$.
\end{definition}

\begin{lis}\label{l1}
The complete list of triangle centers $X_i$, $1\leq i\leq 13$ that are used in this article can be found in the Appendix (see List \ref{ltc} in the Appendix). Some of the centers in use with corresponding numbers:
\begin{enumerate}

\item \text{In(ex)center} $I$, $I(A, B, C)$ -- the incenter of $ABC$ if $A$, $B$, $C$ are placed clockwise on the plane $\mathbb{R}^2$ (or the $A$-excenter of $ABC$ if $A$, $B$, $C$ are placed anti-clockwise on $\mathbb{R}^2$).

\item \text{Centroid} $G$, $G(A, B, C)$ -- the centroid of $ABC$.

\item \text{Circumcenter} $O$, $O(A, B, C)$ -- the circumcenter of $ABC$.

\item \text{Orthocenter} $H$, $H(A, B, C)$ -- the orthocenter of $ABC$.

\item \text{Nine-point center} $N$, $N(A, B, C)$ -- the nine-point center of $ABC$.

\item[7.] \text{First(second) Fermat point} $F$, $F(A, B, C)$ -- the first Fermat point of $ABC$ if $A$, $B$, $C$ are placed clockwise on the plane $\mathbb{R}^2$ (or the second Fermat point of $ABC$ if $A$, $B$, $C$ are placed anti-clockwise on $\mathbb{R}^2$).

\item[9.] \text{Inner(outer) Feuerbach point} $F_e$, $F_e(A, B, C)$ -- the inner Feuerbach point of $ABC$ if $A$, $B$, $C$ are placed clockwise on the plane $\mathbb{R}^2$ (or the $A$-external Feuerbach point of $ABC$ if $A$, $B$, $C$ are placed anti-clockwise on $\mathbb{R}^2$).

\item[12.] \text{Inner(outer) Morley point} $M$, $M(A, B, C)$ -- the $A$-vertex of the inner Morley triangle of $ABC$ if $A$, $B$, $C$ are placed clockwise on the plane $\mathbb{R}^2$ (or the $A$-vertex of the outer Morley triangle of $ABC$ if $A$, $B$, $C$ are placed anti-clockwise on $\mathbb{R}^2$).

\end{enumerate}
\end{lis}

\newpage \section{Definitions of \texorpdfstring{$\t$}{t}-objects}

\begin{definition}\label{d10}
Denote by a {\em $\t$-point} any 6-tuple of points lying on the plane $\mathbb{R}^2$.
\end{definition}

\begin{remark}
    Similarly, one can introduce $\t$-lines, $\t$-circles and other $\t$-curves, but we omit them in this article for simplicity.
\end{remark}

\begin{definition}
For each $\t$-point $x = (x_1, x_2, x_3, x_4, x_5, x_6)$ define $$x_{bc} = x_1,\:\: x_{cb} = x_2,\:\: x_{ca} = x_3,\:\: x_{ac} = x_4,\:\: x_{ab} = x_5,\:\: x_{ba} = x_6.$$
\end{definition}

\begin{example}\label{lam}
    Consider the Van Lamoen configuration (see \cite{van}) -- a triangle $ABC$ with the centroid $G$, the cevian triangle $A'B'C'$ of $G$ wrt $ABC$ and the circumcenters $O_{bc} = O(GBC'),\ldots, $\\$O_{ba} = O(GBA')$ of $GBC',\ldots, GBA'$. In this configuration it is possible to define the following $\t$-points $x = (A, A, B, B, C, C)$, $y = (G, \ldots , G)$, $z = (A', A', B', B', C', C')$, $t = (O_{bc}, \ldots , O_{ba})$. Thus, $x_{bc} = A,\ldots, x_{ba} = C$, $y_{bc} = G,\ldots, y_{ba} = G$, $z_{bc} =A',\ldots, z_{ba} = C'$, $t_{bc} = O_{bc},\ldots, t_{ba} = O_{ba}$.
\end{example}

\begin{definition}\label{d15}

Denote by a {\em $\t$-function} any function $f$ which corresponds a non-empty set of $\t$-points to each $\text{ar}(f)$-tuple of $\t$-points, and is one of the functions $f_{\triangle, i}$ which are listed in the Appendix of this article (see List \ref{ltf} in the Appendix).\\Some of the $\t$-functions in use with corresponding numbers:\\ (these are $\t$-functions which will be used in the further definitions and examples)\begin{enumerate}
\item[1.] $f_{\triangle, 1} = $ the set of all $\t$-points ($f_{\triangle, 1}$ has arity $0$ and, thus, is a set of $\t$-points. Same can be said about $f_{\triangle, i}$, $1\leq i\leq 8$).
\item[2.] $f_{\triangle, 2} = \{x\text{ is a }\t\text{-point} \:\vert\: x_{bc} = x_{cb}, x_{ca} = x_{ac}, x_{ab} = x_{ba}\}$.

    \item[8.] $f_{\triangle, 8} = \{x\text{ is a }\t\text{-point}\:\vert\: x_{bc},\ldots, x_{ba}\text{ lie on a circle}\}$.

\item[11.] $f_{\triangle, 11}( x) = \{y\text{ is a }\t\text{-point}\:\vert\: y_{bc} = x_{bc},\ldots, y_{ba} = x_{ba}\text{ i.e. }y = x\}$.

\item[17.] $f_{\triangle, 17}( x,  y, z) = \left\{t\text{ is a }\t\text{-point}\left\vert\begin{array}{cl} t_{bc},\ldots, t_{ba}\text{ are the projections of}\\x_{bc}, \ldots, x_{ba}\text{ on }y_{bc}z_{bc},\ldots, y_{ba}z_{ba}\end{array}\right.\right\}$.

\item[19.] $f_{\triangle, {19, i}}( x,  y, z) = \left\{t\text{ is a }\t\text{-point}\left\vert\begin{array}{cl} t_{bc} =  X_i(x_{bc}, y_{bc}, z_{bc}), t_{cb} =  X_i(x_{cb}, z_{cb}, y_{cb}),\hfill\hfill\\ t_{ca} =  X_i(z_{ca}, x_{ca}, y_{ca}), t_{ac} =  X_i(y_{ac}, x_{ac}, z_{ac}),\hfill\hfill\\ t_{ab} =  X_i(y_{ab}, z_{ab}, x_{ab}), t_{ba} =  X_i(z_{ba}, y_{ba}, x_{ba})\end{array}\right.\right\}$,\\where $1\leq i\leq 13$, $ X_i$ denotes the $i$-th center from the list \ref{l1}.

\item[20.] $f_{\triangle, {20}}( x,  y, z, t) = \left\{v\text{ is a }\t\text{-point}\left\vert\begin{array}{cl} v_{bc} =  x_{bc}y_{bc}\cap z_{bc}t_{bc},\ldots,\\ v_{ba} =  x_{ba}y_{ba}\cap z_{ba}t_{ba}\hfill\hfill\end{array}\right.\right\}$.

\item[25.] Functions of the form $f_{\triangle, n, \alpha, \beta, \gamma}(x, y, z) := f_{\triangle, n}(x^{\alpha}, y^{\beta}, z^{\gamma})$, $1\leq n\leq 24$, where $\alpha, \beta, \gamma$ are any symbols from the set $\{bc, cb, ca, ac, ab, ba\}$ and for each $\t$-point $x$,\\$x^{bc} := (x_{bc}, x_{cb}, x_{ca}, x_{ac}, x_{ab}, x_{ba}),\quad x^{ac} := (x_{ac}, x_{ca}, x_{cb}, x_{bc}, x_{ba}, x_{ab}),$\\$x^{cb} := (x_{cb}, x_{bc}, x_{ba}, x_{ab}, x_{ac}, x_{ca}),\quad x^{ba} := (x_{ba}, x_{ab}, x_{ac}, x_{ca}, x_{cb}, x_{bc}),$\\$x^{ab} := (x_{ab}, x_{ba}, x_{bc}, x_{cb}, x_{ca}, x_{ac}),\quad x^{ca} := (x_{ca}, x_{ac}, x_{ab}, x_{ba}, x_{bc}, x_{cb})$,\\ denotes the orbit of $x$.

\end{enumerate}
\end{definition}

\begin{definition}\label{N}
    Consider the sequence $x_1, x_2, x_3, \ldots$ of free variables which can be any $\t$-points. Denote by a {\em $\t$-configuration} any logical statement about the sequence $x_1, x_2, x_3,\ldots$, which has the form $$\bigwedge_{i = 1}^N[x_{a_i}\in f_i(x_{b_{i,1}}, x_{b_{i, 2}}, \ldots, x_{b_{i,\text{ar} (f_i)}})], $$where\begin{enumerate}
        \item $N$ is a natural number

        \item $a_1 < a_2 < \ldots < a_N$ is a strongly increasing sequence of natural numbers
        
        \item for each $1\leq i\leq N$, $f_i$ is a $\t$-function
        
        \item for each $1\leq i\leq N$, $b_{i, 1}, b_{i, 2},\ldots, b_{i,\text{ar} (f_i)} < a_i$ is an $\text{ar} (f_i)$-tuple of natural numbers $<a_i$.

        \end{enumerate}
    
\end{definition}
Since statements of the form $[x_i\in f_{\triangle, 1}]$ don't carry any additional information, we will omit such terms within $\t$-configurations (i.e. we may not consider the $\t$-function $f_{\triangle, 1}$).

\begin{definition}

For any $\t$-configuration $c$ denote by $\text{deg}(c)$, $\text{height}(c)$ the values of $N$ and $a_N$ from the definition \ref{N} which are related to $c$, respectively.
\end{definition}

\begin{example}\label{s}
    Consider the following $\t$-configuration $c$ which is related to the example \ref{lam}$$c = [x_1\in f_{\triangle, 2}]\wedge [x_2\in f_{\triangle, 19, 2}(x_1, x_1^{ab}, x_1^{ac})]\wedge [x_3\in f_{\triangle, 20}(x_1, x_2, x_1^{ab}, x_1^{ac})]\wedge [x_5\in f_{\triangle, 19, 3}(x_1, x_2, x_3^{ab})].$$So $\text{deg}(c) = 4$, $\text{height}(c) = 5$.
\end{example}

\begin{definition}
    Let $C_{\triangle}$ denote the set of all $\t$-configurations. Also for each natural $n$ let $\C^n$ denote the set of all $\t$-configurations $c$ with $\text{height}(c)\leq n$. We will say that $c, d\in\C$ are {\em equivalent} if there exists a permutation of variables $\sigma : x_1, x_2, x_3,\ldots\to x_1, x_2, x_3,\ldots$ which sends $c$ to $d$, i.e. $\sigma(c) = d$. We will label $c\simeq d$ for each equivalent $c, d\in\C$.
\end{definition}

\begin{definition}

For each $\t$-configuration $$c = \bigwedge_{i = 1}^N[x_{a_i}\in f_i(x_{b_{i,1}}, x_{b_{i, 2}}, \ldots, x_{b_{i,\text{ar} (f_i)}})], $$let $\text{terms}(c)$ denote the set of $\text{deg} = 1$ $\t$-configurations $$\text{terms}(c) := \{[x_{a_i}\in f_i(x_{b_{i,1}}, x_{b_{i, 2}}, \ldots, x_{b_{i,\text{ar} (f_i)}})]\:\vert\: 1\leq i\leq N\}.$$Also we will say that $d\in\C$ is a {\em predecessor} of $c$ if $$d = \bigwedge_{i = 1}^M[x_{a_i}\in f_i(x_{b_{i,1}}, x_{b_{i, 2}}, \ldots, x_{b_{i,\text{ar} (f_i)}})]$$for some $1\leq M\leq N$.
\end{definition}

\begin{definition}
    For each $\t$-configurations $c, c_1, c_2,\ldots, c_l$ we say that $c = \cup_{i = 1}^lc_i = c_1\cup\ldots\cup c_l$ if $\text{terms}(c) = \cup_{i = 1}^l\text{terms}(c_i) = \text{terms}(c_1)\cup\ldots\cup\text{terms}(c_l)$. Also for each $\t$-configurations $c, d$ we say that $c \subseteq d$ if $\text{terms}(c) \subseteq \text{terms}(d)$, and $c \subsetneq d$ if $\text{terms}(c) \subsetneq \text{terms}(d)$.
\end{definition}

\begin{definition}
    For each $\t$-configurations $c, d$ we say that $c \leq d$ if there exist $\t$-configurations $c'\simeq c, d'\simeq d$, which are equivalent to $c, d$ respectively and are such that $c'$ is a predecessor of $d'$.

\end{definition}

\begin{definition}\label{conf}
    Denote by a {\em $\t$-theorem} any valid implication of the form $c \Rightarrow r$, $c, r\in\C$, where $\text{deg}(r) = 1$.
\end{definition}

\begin{example}

Consider the $\t$-configuration $c$ as in the example \ref{s}, and let $r = [x_5\in f_{\triangle, 8}]$. Then from the Van Lamoen theorem (see \cite{van}) we have that $c \Rightarrow r$ is a $\t$-theorem.
\end{example}

\begin{definition}\label{abc}
	Consider a triangle $ABC$ lying on the plane $\mathbb{R}^2$ in general position. For each $\t$-configuration $$c = \bigwedge_{i = 1}^N[x_{a_i}\in f_i(x_{b_{i,1}}, x_{b_{i, 2}}, \ldots, x_{b_{i,\text{ar} (f_i)}})]\in\C, $$ let $c(ABC)$ denote\begin{enumerate}
	    
     \item the set of $\t$-points $\{x_{a_1}, x_{a_2},\ldots, x_{a_N}\}$ satisfying the system of equations $$\left\{\begin{array}{cl}x_{a_1} = (A, A, B, B, C, C)\in f_1(x_{b_{1,1}}, x_{b_{1, 2}}, \ldots, x_{b_{1,\text{ar} (f_1)}})\hfill\hfill\\ x_{a_2}\in f_2(x_{b_{2,1}}, x_{b_{2, 2}}, \ldots, x_{b_{2,\text{ar} (f_2)}})\hfill\hfill\\\ldots\hfill\hfill\\ x_{a_N}\in f_N(x_{b_{N,1}}, x_{b_{N, 2}}, \ldots, x_{b_{N,\text{ar} (f_N)}})\hfill\hfill\end{array}\right.$$if $f_1 = f_{\triangle, 2}$, $a_i = i$ ($1\leq i\leq N$), and this system of equations has the unique solution
     \item $c(ABC) = \varnothing$, otherwise.
	\end{enumerate}

\end{definition}

\begin{example}
    Consider the following $\t$-configuration $c'$$$c' = [x_1\in f_{\triangle, 2}]\wedge [x_2\in f_{\triangle, 19, 2}(x_1, x_1^{ab}, x_1^{ac})]\wedge [x_3\in f_{\triangle, 20}(x_1, x_2, x_1^{ab}, x_1^{ac})]\wedge [x_4\in f_{\triangle, 19, 3}(x_1, x_2, x_3^{ab})].$$So $c'$ is equivalent to $c$ from the example \ref{s}, and $c'(ABC) \not= \varnothing$.
\end{example}

\begin{definition}
For each triangle $ABC$ in general position and each set $S\subseteq \C$, denote $$S(ABC) := \bigcup_{c\in S}c(ABC).$$
\end{definition}

\section{Construction of \texorpdfstring{$S_{\triangle}^7$}{T7S7}}

\begin{definition}
    For a $\t$-configuration $c$ denote by $\normalfont{\text{Gen}(c)}$ the set of {\em generalizations of $c$}, where $$\normalfont{\text{Gen}(c)} \!:=\! \left\{d\in\C\left\vert\begin{array}{cl} \text{there exist }\t\text{-configurations }c'\simeq c, d'\simeq d,\text{ which are equivalent to }c, d\\\text{respectively and are such that: }\hfill\hfill\\\:1.\:\: c'\Rightarrow d'\text{ is a valid implication, and}\hfill\hfill\\\:2.\:\: c'\not\Leftrightarrow d' \hfill\hfill\end{array}\right.\right\}$$
\end{definition}

\begin{remark}
    It is also possible to implement a larger set of generalizations of $c$ by further considering cases when $d' = \sigma(d)$ for some surjective (and not necessarily bijective) mapping of variables $\sigma : x_1, x_2, x_3,\ldots\to x_1, x_2, x_3,\ldots$, and adding some additional condition 3 (see for example the generalization of Gergonne theorem in \cite[Theorem 9.1(1), p.13]{SNT}). However, we will omit such generalizations for simplicity.
\end{remark}

\begin{definition}
    A set $S\subseteq\C$ is called {\em complete} (we will also call such a set as ``maximally general'') if for each $d\in S$ the set of all $\t$-theorems of the form $c\Rightarrow r$, $c\in S$, $r\in\C$ can't be deductively derived\footnote{Here by ``can be deductively derived'' we mean ``can be derived with using the set of inference rules:

\begin{equation}
\inference{(a, b, c\text{ are any logical statements}) & a  \Rightarrow  b & b  \Rightarrow  c}
{a  \Rightarrow  c}
\end{equation}

\begin{equation}
\inference{(a, b, c\text{ are any logical statements})}
{a\Leftrightarrow a\wedge a\qquad a\wedge b  \Rightarrow  a\qquad a\wedge b  \Rightarrow  b\qquad a\wedge b \Leftrightarrow b\wedge a\qquad (a\wedge b)\wedge c \Leftrightarrow a\wedge (b\wedge c)}
\end{equation}

\begin{equation}
\inference{(a_1, a_2, b_1, b_2\text{ are any logical statements}) & a_1  \Rightarrow  b_1 & a_2  \Rightarrow  b_2}
{a_1\wedge a_2  \Rightarrow  b_1\wedge b_2}
\end{equation}

\begin{equation}
\inference{(c, d\in\C, \sigma : x_1, x_2, x_3,\ldots\to x_1, x_2, x_3,\ldots\text{ is a surjective mapping of variables}) & c \Rightarrow d }
{\sigma (c) \Rightarrow \sigma (d)}
\end{equation}

\begin{equation}
\inference{(c_1, c_2, d\in\C) & c_1\Rightarrow d & c_2\text{ is a predecessor of } c_1 & \text{height}(d)\leq\text{height}(c_2)}
{c_2 \Rightarrow d}.
\end{equation}

} from the set of $\t$-theorems of the form $c\Rightarrow r$, $c\in (S\setminus\{d\})\cup\text{Gen}(d)$, $r\in\C$.
\end{definition}

\begin{definition}
    Consider a natural number $n$. A set $S\subseteq\C$ is called {\em $n$-complete} if it is complete and each $\t$-theorem of the form $c\Rightarrow r$, $c\in\C^n$, $r\in\C$ can be deductively derived$^1$ from the set of $\t$-theorems of the form $c\Rightarrow r$, $c\in S$, $r\in\C$.
\end{definition}

Next, we will be interested in computing $7$-complete sets, however none of these sets can be computed in practice, and we finish this section by constructing its computable analogue $\S^7$.

\begin{definition}\label{cgen}
	A $\t$-theorem $c \Rightarrow r$ is called {\em computably generalizable} if there exists a set of $\t$-theorems of the form $\{c_i \Rightarrow r_i, d \Rightarrow r\:\vert\: 1\leq i\leq l\}$, such that $l\geq 1$, $c_i\subsetneq c$, $d = \cup_{i = 1}^l r_i$, $d\not\Leftrightarrow c$ ($1\leq i\leq l$). Obviously each computably generalizable $\t$-theorem $c\Rightarrow r$ can be deductively derived$^1$ from the set of $\t$-theorems $\{c_i \Rightarrow r_i, d \Rightarrow r\:\vert\: 1\leq i\leq l\}$ and, thus, from the set of $\t$-theorems of the form $c'\Rightarrow r'$, $c'\in\text{Gen}(c)$, $r'\in\C$.
\end{definition}

\begin{definition}\label{cccgen}
	Consider a triangle $ABC$ in general position. For a $\t$-configuration $c$ denote by $\normalfont{\text{Gen}_{\triangle}(c)}$ the set of {\em triangular computable generalizations of $c$}, where $$\normalfont{\text{Gen}_{\triangle}(c)} := \{d\in\C\:\vert\: d(ABC)\not=\varnothing, d\leq c,\text{ and } d\not\simeq c\}.$$Also for a $\t$-configuration $c$, define \begin{equation*}
	\text{CGen}_{\triangle}(c) := \begin{cases}\text{Gen}_{\triangle}(c), &\text{if each }\t\text{-theorem of the form }c \Rightarrow r, r\in\C \text{ with}\\&\text{height}(r)\leq \text{height}(c)\text{ is computably generalizable},\\c, &\text{otherwise}.\end{cases}\end{equation*}Additionally, for a set $S\subseteq \C$, define the sets\begin{itemize}
		\item  $\text{CGen}_{\triangle}(S) := \cup_{c\in S}\text{CGen}_{\triangle}(c)$, 
		\item $\text{CGen}_{\triangle}^1(S) := \text{CGen}_{\triangle}(S)$,
		
		\item $\text{CGen}_{\triangle}^{i + 1}(S) := \text{CGen}_{\triangle}(\text{CGen}_{\triangle}^i(S))$, $i = 1, 2, 3,\ldots$,
		
		\item $\text{MaxCGen}_{\triangle}(S) := \text{CGen}_{\triangle}^d(S)$, where $d$ is the minimal natural number such that $\text{CGen}_{\triangle}^d(S) = \text{CGen}_{\triangle}^{d + 1}(S)$.
	\end{itemize}
\end{definition}

\begin{remark}
    The sets $\text{CGen}_{\triangle}(S)$, $\text{CGen}_{\triangle}^i(S)$, $\text{MaxCGen}_{\triangle}(S)$ can be calculated in practice with the help of Propositions \ref{jjjj}, \ref{jj} from Section 5.
\end{remark}

\begin{definition}\label{d}
Define the sets $\S^n\subseteq \C^n$, $n\geq 1$ inductively. Let $\S^1 := \{[x_1\in f_{\triangle, 2}]\}$. Assume that for a natural $t\geq 1$ the set $\S^t\subseteq\C^t$ is already constructed. Consider the sets \begin{enumerate}
    \item[] $J := \left\{c\in\C^{t + 1}\left\vert\begin{array}{cl} c = c_1\cup c_2\in\C^{t + 1}\text{ for some}\hfill\hfill\\c_1\in \S^t, c_2\in\C^{t + 1}\text{ with }\text{deg}(c_2) = 1, \text{height}(c_2) = t + 1\end{array}\right.\right\},$
    
    \item[] $\S^{t + 1} := \S^t\cup\text{MaxCGen}_{\triangle}(J).$
    
\end{enumerate}From this inductive process construct the sets $\S^1\subseteq \S^2\subseteq \ldots\subseteq \S^n\subseteq\ldots$. It is easy to see that $\S^n = \text{MaxCGen}_{\triangle}(\S^n)$ for each $n\geq 1$.
\end{definition}

In what follows, we will be interested in computing the set $\S^7$. The set $\S^7$ can be seen as a computable analogue and an approximation of $7$-complete sets.

\begin{remark}\label{rlc}
    Note that when calculating $\S^7$, on each new step, we don't need to list those $\t$-configurations that have already been listed.
\end{remark}
The set $S_{\triangle}^{7}$ can be computed in practice from its definition with the help of Remark \ref{rlc} and the Propositions \ref{jjjj}, \ref{jj} from Section 5.

\section{Automated generation of theorems based on \texorpdfstring{$S_{\triangle}^{7}$}{S7}}
In this section, we introduce an algorithm for a computer that generates and categorizes triangle geometry theorems based on the set $S_{\triangle}^{7}$.

\begin{definition}
    For each $\t$-point $a\in S_{\triangle}^{7}(ABC)$ let $f_1(a), f_2(a), \ldots, f_{\gamma(a)}(a)$ denote the sequence of $\t$-functions which are used for the definition of $a$ and are ordered according their appearance. Also denote by $\Gamma(a)$ the sequence $(f_1(a), f_2(a),\ldots, f_{\gamma(a)}(a))$ after excluding those $f_k(a), 1\leq k\leq \gamma(a)$ which do not have the form $f_{\triangle, 19, i}$ for some $1\leq i\leq 13$, and then replacing each uniform segment of the remaining sequence of $\t$-functions with the single $\t$-function of the same type (for example, if $a\in S_{\triangle}^{7}(ABC)$ is such that $(f_1(a), f_2(a), \ldots, f_{\gamma(a)}(a)) = (f_{\triangle, 2}, f_{\triangle, 19, 1}, f_{\triangle, 3}, f_{\triangle, 19,1}, f_{\triangle, 19, 9})$, then $\Gamma(a) = (f_{\triangle, 19, 1}, f_{\triangle, 19, 9})$).
\end{definition}

The next algorithm generates triangle theorems based on the computation of $S_{\triangle}^{7}(ABC)$. Also it produces an intuitive categorization of theorems, the same as in the articles \cite{cos, cos1}.

\begin{al}\label{d425}
	The computer program inputs a sequence $X_{i_1}, X_{i_2}, \ldots, X_{i_d}$, $1\leq i_1, i_2,\ldots, i_d\leq 13$, $d\geq 1$ of triangle centers from the list \ref{l1} which has no uniform segments (i.e. $i_k\not=i_{k + 1}$, $1\leq k < d$), and then produces the output after following the steps below.
	\begin{enumerate}

 \item For a triangle $ABC$ in general position, compute the set $\S^7(ABC)$.
 
\item In ``Objects $X_{i_1}- X_{i_2}- \ldots- X_{i_d}$'' section print the definitions and notations of all $\t$-points $a\in S_{\triangle}^{7}(ABC)$ with $\Gamma(a) = (f_{\triangle, 19, i_1}, f_{\triangle, 19, i_2},\ldots, f_{\triangle, 19, i_d})$.
As in the ETC \cite{etc}, we can label objects from the section ``Objects $X_{i_1}- X_{i_2}- \ldots- X_{i_d}$'' as $(X_{i_1}- X_{i_2}- \ldots- X_{i_d})_1$, $(X_{i_1}- X_{i_2}- \ldots- X_{i_d})_2$, $(X_{i_1}- X_{i_2}- \ldots- X_{i_d})_3$, $\ldots$.

\item Compute and print in ``Properties $X_{i_1}- X_{i_2}- \ldots- X_{i_d}$'' section all correct statements of the form $[a\in f(a_1, a_{2}, \ldots, a_{\text{ar} (f)})]$, where $f$ is any $\t$-function and $a$, $a_{1}$, $a_{2}$, $\ldots$, $a_{\text{ar} (f)}$ are any $\t$-points from the section ``Objects $X_{i_1}- X_{i_2}- \ldots- X_{i_d}$''.
	\end{enumerate}
\end{al}
\begin{remark}
Note that in the section ``Objects $X_{i_1}- X_{i_2}- \ldots- X_{i_d}$'' from the algorithm \ref{d425} for each object $x$ it is possible to leave only the representative $(x_{bc}, \ldots, x_{ba})$ of the orbit of elements$$x = x^{bc} := (x_{bc}, x_{cb}, x_{ca}, x_{ac}, x_{ab}, x_{ba}),\quad x^{ac} := (x_{ac}, x_{ca}, x_{cb}, x_{bc}, x_{ba}, x_{ab}),$$$$x^{cb} := (x_{cb}, x_{bc}, x_{ba}, x_{ab}, x_{ac}, x_{ca}),\quad x^{ba} := (x_{ba}, x_{ab}, x_{ac}, x_{ca}, x_{cb}, x_{bc}),$$$$x^{ab} := (x_{ab}, x_{ba}, x_{bc}, x_{cb}, x_{ca}, x_{ac}),\quad x^{ca} := (x_{ca}, x_{ac}, x_{ab}, x_{ba}, x_{bc}, x_{cb}).$$We also need to replace all sequences $x, y,\ldots, z$ of objects from ``Objects $X_{i_1}- X_{i_2}- \ldots- X_{i_d}$'' that have the same coordinates as $\t$-points (i.e. are such that $x = y = \ldots = z$) on the single object $x$, and list all descriptions of $x$, that are coming from $x, y, \ldots, z$, in the definition of $x$.
\end{remark}

\subsection{Relation to the articles [8, 9]}We expect that a computer program based on the algorithm \ref{d425} will be able to generate almost all of the theorems from the articles \cite{cos, cos1}.

\section{Practical implementation}\label{sss}

\begin{definition}
Consider any $\t$-configuration $$c = \bigwedge_{i = 1}^N[x_{a_i}\in f_i(x_{b_{i,1}}, x_{b_{i, 2}}, \ldots, x_{b_{i,\text{ar} (f_i)}})]\in\C. $$ For each $l\geq 1$, denote by $O_l(c)$ the set of all $\t$-configurations $c'$ which have the following form $$c' = \left(\bigwedge_{\substack{1\leq i\leq N \: :\: a_i< l}}[x_{a_i}\in f_i(x_{b_{i,1}}, x_{b_{i, 2}}, \ldots, x_{b_{i,\text{ar} (f_i)}})]\right)\wedge$$$$\wedge [x_{l}\in f(x_{b_{1}}, x_{b_{ 2}}, \ldots, x_{b_{\text{ar} (f)}})]\wedge$$$$\wedge\left(\bigwedge_{\substack{1\leq i\leq N \: :\: a_i > l}}[x_{a_i}\in f_i(x_{b_{i,1}}, x_{b_{i, 2}}, \ldots, x_{b_{i,\text{ar} (f_i)}})]\right)$$for some $\text{deg} = 1$, $\text{height} = l$ $\t$-configuration $[x_{l}\in f(x_{b_{1}}, x_{b_{ 2}}, \ldots, x_{b_{\text{ar} (f)}})]$.
\end{definition}

The following propositions \ref{jjjj}, \ref{jj} can be used for computing $\text{Gen}_{\triangle}(\cdot)$, $\text{CGen}_{\triangle}(\cdot)$, \\$\text{MaxCGen}_{\triangle}(\cdot)$, and $\S^7$ from the definitions \ref{cccgen}, \ref{d}.

For a $\t$-configuration $c\in\C^7$, the set $\{e\in\C^7\:\vert\: e\leq c, e\not\simeq c,\text{ and } e(ABC)\not=\varnothing\}$ can be easily computed in practice by brutal force method, thus to compute $\text{Gen}_{\triangle}(c)$ it is enough to develop a method for calculating the set $\{d\in\C^7\:\vert\: c\Leftrightarrow d\}$. The next proposition \ref{jjjj} describes such a method.

\begin{proposition}\label{jjjj}
Consider any $\t$-configuration $c\in\C^7$. The set $\{d\in\C^7\:\vert\: c\Leftrightarrow d\}$ can be computed after providing the following steps:\begin{enumerate}

    \item consider the set $D_1$ of all $d_1\in O_{7}(c)$ with $d_1 \Leftrightarrow c$
    \item consider the set $D_2$ of all $d_2\in \cup_{d_1\in D_1}O_{6}(d_1)$ with $d_2 \Leftrightarrow c$
    \item repeat step 3 for $D_2$ instead of $D_1$ and finish with the set $D_3 = \{d_3\in \cup_{d_2\in D_2}O_{5}(d_2)\:\vert\: d_3 \Leftrightarrow c\}$
    \item repeat step 3 for $D_3, D_4,\ldots$ until we finish with the set $D_7$ which satisfies $D_7 = \{d\in\C^7\:\vert\: c\Leftrightarrow d\}$.
\end{enumerate}

\end{proposition}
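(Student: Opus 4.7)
The plan is to prove the two inclusions $D_7\subseteq\{d\in\C^7 : c\Leftrightarrow d\}$ and $\{d\in\C^7 : c\Leftrightarrow d\}\subseteq D_7$ separately. The first is immediate by construction: at each step, $D_l$ is defined by filtering its candidate elements against the condition $\Leftrightarrow c$, so by induction every element of $D_l$ is equivalent to $c$. The real content is the reverse inclusion, which I would establish by constructing an explicit ``interpolation'' from $c$ to any given $d\in\C^7$ with $c\Leftrightarrow d$.

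The central ingredient is the following structural lemma: for any $c,d\in\C$ with $c\Leftrightarrow d$, and for any natural $l$, the ``lower parts'' $c^{<l}$ and $d^{<l}$ (by which I mean the conjunction of those terms whose introduced variable has index less than $l$) are themselves logically equivalent. I would prove this by exploiting the hypothesis, recalled from Definition \ref{d15}, that every $\t$-function $f$ produces a \emph{non-empty} set of $\t$-points. Given any partial assignment $(x_1,\ldots,x_{l-1})$ making $c^{<l}$ true, I extend it inductively in order of increasing height to a full assignment satisfying $c$, choosing at each new height $a_i\geq l$ some $x_{a_i}$ in the non-empty set $f_i(x_{b_{i,1}},\ldots,x_{b_{i,\text{ar}(f_i)}})$. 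Since $c\Leftrightarrow d$, the extended assignment also satisfies $d$, hence satisfies $d^{<l}$; and since $d^{<l}$ depends only on $(x_1,\ldots,x_{l-1})$ it must already have been true there. This yields $c^{<l}\Rightarrow d^{<l}$, and symmetry completes the lemma.

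With the lemma in hand, I would fix an arbitrary $d\in\C^7$ with $c\Leftrightarrow d$ and define the interpolating sequence
\[ e_l := c^{<8-l}\wedge d^{\geq 8-l}, \qquad l=0,1,\ldots,7, \]
so that $e_0=c$ and $e_7=d$. Applying the lemma at the cut $8-l$ gives $c^{<8-l}\Leftrightarrow d^{<8-l}$, whence
\[ e_l = c^{<8-l}\wedge d^{\geq 8-l} \Leftrightarrow d^{<8-l}\wedge d^{\geq 8-l} = d \Leftrightarrow c. \]
Moreover, $e_l$ and $e_{l-1}$ agree at every height except $8-l$, so $e_l\in O_{8-l}(e_{l-1})$. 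A straightforward induction on $l$ then shows $e_l\in D_l$, and the case $l=7$ delivers $d\in D_7$.

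The main technical obstacle is the bookkeeping when $c$ or $d$ carries no term at some given height, since $O_l(\cdot)$ is defined so as to always produce a height-$l$ term in its output. I would handle this via the convention in the paper that terms $[x_i\in f_{\triangle,1}]$ carry no information and may be freely inserted or omitted; under this convention the step $e_l\in O_{8-l}(e_{l-1})$ remains legal even when $d$ (and hence $e_l$) imposes no actual constraint at height $8-l$. Once this notational point is settled, the whole argument is driven by the non-emptiness of $\t$-functions, which is exactly what makes the lower-part lemma go through.
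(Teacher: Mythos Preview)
Your proposal is correct and follows essentially the same route as the paper: the paper derives Proposition~\ref{jjjj} as a ``trivial consequence'' of Proposition~\ref{j}, whose content is precisely your lower-part lemma ($c(l)\Rightarrow c'(l)$) together with the hybrid configuration $d(l)=c(l)\wedge c''(l)$, which is exactly your interpolant $e_l$. Your argument is in fact more explicit than the paper's, since you identify the non-emptiness of $\t$-functions (Definition~\ref{d15}) as the mechanism behind what the paper phrases loosely as ``variables $x_i$ with $1\le i\le l$ are independent of variables $x_j$ with $j>l$''; your handling of the bookkeeping at missing heights via $f_{\triangle,1}$ is also appropriate.
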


To compute the sets $\text{CGen}_{\triangle}(\cdot)$, $\text{MaxCGen}_{\triangle}(\cdot)$, we need to use the method of computation of $\text{Gen}_{\triangle}(c)$, which was described previously, and also to develop a method for checking whether a given $\t$-theorem $c \Rightarrow r$ with $c, r\in\C^7$ is computationally generalizable. The next proposition \ref{jj} describes such a method.

\begin{proposition}\label{jj}
Consider any $\t$-theorem $c\Rightarrow r$ such that $c, r\in\C^7$. Then to understand whether $c \Rightarrow r$ is computationally generalizable we need to provide the following steps:\begin{enumerate}
\item Compute the set $$U_c := \{d\in\C^7\:\vert\: \text{deg}(d) = 1\text{ and }c' \Rightarrow d,\text{ for some }c'\subsetneq c\}$$
    \item  consider the set $D_1$ of all $d_1\in O_{7}(c)$ with $d_1 \Rightarrow r$ and $\text{terms}(d_1)\subseteq U_c$
    \item if there exists $d_1\in D_1$ with $d_1\not\Leftrightarrow c$, then finish with the string ``$c \Rightarrow r$ is computationally generalizable''. Otherwise consider the set $D_2$ of all $d_2\in \cup_{d_1\in D_1}O_{6}(d_1)$ with $d_2 \Rightarrow r$ and $\text{terms}(d_2)\subseteq U_c$
    \item repeat step 3 for $d_2\in D_2$ instead of $d_1\in D_1$ and finish either with the string ``$c \Rightarrow r$ is computationally generalizable'', or with the set $$D_3 = \{d_3\in \cup_{d_2\in D_2}O_{5}(d_2)\:\vert\: d_3 \Rightarrow r, \text{terms}(d_3)\subseteq U_c\}$$
    \item repeat step 3 for $D_3, D_4,\ldots$ until we finish either with the string ``$c \Rightarrow r$ is computationally generalizable'', or with the set $D_7$, and in the latter case return the string ``$c \Rightarrow r$ is not computationally generalizable''.
\end{enumerate}

\end{proposition}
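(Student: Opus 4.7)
The plan is to prove the proposition in two directions: soundness (if the algorithm returns ``computationally generalizable'', then it truly is) and completeness (the converse). Soundness is routine from Definition~\ref{cgen}: whenever the algorithm halts affirmatively at some step, it has exhibited a $d \in D_k$ with $d \not\Leftrightarrow c$, $d \Rightarrow r$, and $\text{terms}(d) \subseteq U_c$. For each term $t \in \text{terms}(d)$, the definition of $U_c$ supplies a strict subconfiguration $c_t \subsetneq c$ with $c_t \Rightarrow t$, and writing $d = \bigcup_{t \in \text{terms}(d)} t$ yields the family $\{c_t \Rightarrow t\}_{t \in \text{terms}(d)} \cup \{d \Rightarrow r\}$ witnessing Definition~\ref{cgen}.

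For completeness, suppose $c \Rightarrow r$ is computably generalizable; then by Definition~\ref{cgen} there exists $d \in \C^7$ with $d \Rightarrow r$, $\text{terms}(d) \subseteq U_c$, and $d \not\Leftrightarrow c$. I would construct a path from $c$ to (an equivalent of) $d$ inside the algorithm's search tree, modifying one variable-level at a time from level $7$ downward. Define $d^{(7)}, d^{(6)}, \ldots, d^{(1)}$ inductively: let $d^{(7)} \in O_7(c)$ be $c$ with its level-$7$ term replaced by $d$'s level-$7$ term, and let $d^{(k)} \in O_k(d^{(k+1)})$ install $d$'s level-$k$ term into $d^{(k+1)}$. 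Then $d^{(k)}$ has $d$'s terms at levels $\geq k$ and $c$'s terms at levels $<k$, so $d^{(1)} = d$. Verifying that each $d^{(k)}$ lies in $D_{8-k}$ would place the entire path inside the algorithm's search tree; since $d^{(1)} = d \not\Leftrightarrow c$, there is a smallest $k^*$ with $d^{(k^*)} \not\Leftrightarrow c$, at which point step $8 - k^*$ of the algorithm encounters $d^{(k^*)} \in D_{8-k^*}$ and halts with ``computationally generalizable''.

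The main obstacle is thus verifying $d^{(k)} \in D_{8-k}$, which amounts to showing $\text{terms}(d^{(k)}) \subseteq U_c$ and $d^{(k)} \Rightarrow r$. The former holds easily: whenever $\deg(c) \geq 2$ (the only non-trivial case, since $\deg(c)=1$ forces $U_c = \varnothing$ and precludes computable generalizability), each term of $c$ is implied by the subconfiguration obtained by deleting any other term, so $\text{terms}(c) \cup \text{terms}(d) \subseteq U_c$ and hence $\text{terms}(d^{(k)}) \subseteq U_c$. Preserving $d^{(k)} \Rightarrow r$ is the delicate part, since a ``mixed'' configuration of $c$'s lower-level terms with $d$'s higher-level terms need not imply $r$. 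I would resolve this by first replacing $d$ with an equivalent, possibly enlarged, witness that has a term at every level in $\{1, \ldots, 7\}$---padding empty levels with trivial identity terms $[x_l \in f_{\triangle, 11}(\cdot)]$, which remain in $U_c$---and then, among all such padded witnesses, choosing one that retains whichever of $c$'s terms are required in the derivation of $r$. With such a careful choice, every intermediate $d^{(k)}$ inherits enough logical content to imply $r$, closing the argument.
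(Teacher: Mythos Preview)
Your overall architecture---soundness from Definition~\ref{cgen}, completeness by building a level-by-level path $d^{(7)},\ldots,d^{(1)}$ from $c$ to the witness $d$---is exactly the skeleton the paper uses. The genuine gap is in the ``delicate part'': your justification that each hybrid $d^{(k)}$ still implies $r$ is not a proof. You propose to ``choose a padded witness that retains whichever of $c$'s terms are required in the derivation of $r$'', but this is circular: you do not know in advance which terms are ``required'', and enlarging $d$ with terms of $c$ risks forcing $d\Leftrightarrow c$, destroying the witness. No careful choice of $d$ is actually needed.

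What you are missing is the structural fact the paper isolates as Proposition~\ref{j}. Because every term $[x_{a_i}\in f_i(\ldots)]$ only refers to variables of \emph{strictly smaller} index, and because every $\t$-function has non-empty values, any assignment to $x_1,\ldots,x_l$ satisfying $c(l)$ extends to an assignment satisfying all of $c$. Consequently $c\Rightarrow d$ forces $c(l)\Rightarrow d(l)$ for every $l$, and hence the hybrid $c(l)\wedge d''(l)$ implies $d(l)\wedge d''(l)=d$. In your notation, since $\text{terms}(d)\subseteq U_c$ gives $c\Rightarrow d$, this yields $d^{(k)}=c(k-1)\wedge d''(k-1)\Rightarrow d\Rightarrow r$ directly, with no choice of $d$ involved. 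Once you invoke this, your path argument goes through verbatim.

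A smaller point: padding with $[x_l\in f_{\triangle,11}(\cdot)]$ is not innocuous, since $f_{\triangle,11}(x)=\{x\}$ imposes the equality $x_l=x_j$; the always-true clause is $[x_l\in f_{\triangle,1}]$.
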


Propositions \ref{jjjj}, \ref{jj} are trivial consequences of the following Proposition \ref{j}.

\begin{proposition}\label{j}
Consider any $\t$-configurations $$c = \bigwedge_{i = 1}^N[x_{a_i}\in f_i(x_{b_{i,1}}, x_{b_{i, 2}}, \ldots, x_{b_{i,\text{ar} (f_i)}})]\in\C, $$ $$c' = \bigwedge_{i = 1}^{N'}[x_{a_i'}\in f_i'(x_{b_{i,1}'}, x_{b_{i, 2}'}, \ldots, x_{b_{i,\text{ar} (f_i')}'})]\in\C, $$ with $c\Rightarrow c'$. Then we have that for each natural $l$, $\t$-configurations\begin{enumerate}
    \item $c(l) = \displaystyle{\bigwedge_{\substack{i = 1\\a_i\leq l}}^N[x_{a_i}\in f_i(x_{b_{i,1}}, x_{b_{i, 2}}, \ldots, x_{b_{i,\text{ar} (f_i)}})]}$
    \item $c'(l) = \displaystyle{\bigwedge_{\substack{i = 1\\a_i'\leq l}}^{N'}[x_{a_i'}\in f_i'(x_{b_{i,1}'}, x_{b_{i, 2}'}, \ldots, x_{b_{i,\text{ar} (f_i')}'})]}$
    \item $\displaystyle{c''(l) = \bigwedge_{\substack{i = 1\\a_i'> l}}^{N'}[x_{a_i'}\in f_i'(x_{b_{i,1}'}, x_{b_{i, 2}'}, \ldots, x_{b_{i,\text{ar} (f_i)}'})]}$
    \item $d(l) = c(l)\wedge c''(l)$
    \end{enumerate}
are such that $c(l)\Rightarrow c'(l)$, $c\Rightarrow d(l)\Rightarrow c'$.

\end{proposition}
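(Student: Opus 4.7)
\medskip
\noindent\textbf{Proof proposal.}
The plan is to reduce everything to one structural observation and one extension argument, after which the stated implications fall out by pure propositional manipulation. First I would record the observation that each conjunct of $c'(l)$ has the form $[x_{a_i'}\in f_i'(x_{b_{i,1}'},\ldots,x_{b_{i,\text{ar}(f_i')}'})]$ with $a_i'\leq l$ and $b_{i,k}'<a_i'\leq l$, so every variable appearing in $c'(l)$ has index at most $l$. Consequently, whether a given assignment $\alpha$ of $\t$-points to the free variables $x_1,x_2,\ldots$ satisfies $c'(l)$ depends only on the values $\alpha(x_1),\ldots,\alpha(x_l)$, and the same applies to $c(l)$.

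The main step is then the implication $c(l)\Rightarrow c'(l)$. Given any assignment $\alpha$ that satisfies $c(l)$, I would modify $\alpha$ to an assignment $\bar\alpha$ which agrees with $\alpha$ on every variable except possibly those $x_{a_i}$ with $a_i>l$ and which satisfies all of $c$. This is done by processing the conjuncts $[x_{a_i}\in f_i(x_{b_{i,1}},\ldots,x_{b_{i,\text{ar}(f_i)}})]$ of $c$ with $a_i>l$ in order of increasing $a_i$: at each step every argument $x_{b_{i,k}}$ is already assigned in $\bar\alpha$ because $b_{i,k}<a_i$, and by Definition \ref{d15} the set $f_i(\bar\alpha(x_{b_{i,1}}),\ldots,\bar\alpha(x_{b_{i,\text{ar}(f_i)}}))$ is non-empty, so some valid value for $x_{a_i}$ exists. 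The resulting $\bar\alpha$ satisfies $c$, hence satisfies $c'$ by hypothesis, and in particular satisfies $c'(l)$; since $c'(l)$ only depends on coordinates of index at most $l$, where $\bar\alpha$ agrees with $\alpha$, we conclude that $\alpha$ itself satisfies $c'(l)$.

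Once $c(l)\Rightarrow c'(l)$ is in hand, the remaining assertions follow formally: $c\Rightarrow c(l)$ is immediate since $c(l)$ is a sub-conjunction of $c$; $c\Rightarrow c''(l)$ holds because $c''(l)$ is a sub-conjunction of $c'$ combined with the hypothesis $c\Rightarrow c'$; together these give $c\Rightarrow c(l)\wedge c''(l)=d(l)$, and then $d(l)\Rightarrow c'(l)\wedge c''(l)=c'$ by applying the first implication to the $c(l)$ factor of $d(l)$. The hard part will be the extension step: one must be careful about ``free'' indices $b_{i,k}$ that never appear as any $a_j$, i.e.\ variables that $c$ references but never constrains; these are simply kept at the arbitrary values given by $\alpha$, and it is precisely the acyclic dependency structure $b_{i,k}<a_i$ together with the non-emptiness clause in the definition of a $\t$-function that makes the extension go through without conflicts.
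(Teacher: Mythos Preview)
Your argument is correct and is essentially a fleshed-out version of the paper's one-line proof, which simply asserts that the proposition follows from the fact that in any $\t$-configuration the variables $x_i$ with $i\le l$ are independent of the variables $x_j$ with $j>l$. Your extension step---building $\bar\alpha$ from $\alpha$ by sequentially assigning the $x_{a_i}$ with $a_i>l$ using the non-emptiness clause in Definition~\ref{d15}---is precisely the content the paper leaves implicit, and is what turns that independence observation into an actual derivation of $c(l)\Rightarrow c'(l)$.
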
\begin{proof}Proposition \ref{j} follows from the fact that for each $l\geq 1$ and each $\t$-configuration $$c = \bigwedge_{i = 1}^N[x_{a_i}\in f_i(x_{b_{i,1}}, x_{b_{i, 2}}, \ldots, x_{b_{i,\text{ar} (f_i)}})]\in\C, $$variables $x_{i}$ inside $c$ with $1\leq i\leq l$ are independent of variables $x_j$ inside $c$ with $j > l$.\end{proof}

\section{Appendix}
This appendix contains the complete lists of triangle centers and $\t$-functions that we use in this article.

\begin{lis}\label{ltc}

The list of triangle centers

\begin{enumerate}

\item \text{In(ex)center} $I$, $I(A, B, C)$ -- the in center of $ABC$ if $A$, $B$, $C$ are placed clockwise on the plane $\mathbb{R}^2$ (or the $A$-excenter of $ABC$ if $A$, $B$, $C$ are placed anti-clockwise on $\mathbb{R}^2$).

\item \text{Centroid} $G$, $G(A, B, C)$ -- the centroid of $ABC$.

\item \text{Circumcenter} $O$, $O(A, B, C)$ -- the circumcenter of $ABC$.

\item \text{Orthocenter} $H$, $H(A, B, C)$ -- the orthocenter of $ABC$.

\item \text{Nine-point center} $N$, $N(A, B, C)$ -- the nine-point center of $ABC$.

\item \text{Symmedian point} $S$, $S(A, B, C)$ -- the Symmedian point of $ABC$.

\item \text{First(second) Fermat point} $F$, $F(A, B, C)$ -- the first Fermat point of $ABC$ if $A$, $B$, $C$ are placed clockwise on the plane $\mathbb{R}^2$ (or the second Fermat point of $ABC$ if $A$, $B$, $C$ are placed anti-clockwise on $\mathbb{R}^2$).

\item \text{First(second) Isodynamic point} $I_s$, $I_s(A, B, C)$ -- the first isodynamic point of $ABC$ if $A$, $B$, $C$ are placed clockwise on the plane $\mathbb{R}^2$ (or the second Isodynamic point of $ABC$ if $A$, $B$, $C$ are placed anti-clockwise on $\mathbb{R}^2$).

\item \text{Inner(outer) Feuerbach point} $F_e$, $F_e(A, B, C)$ -- the inner Feuerbach point of $ABC$ if $A$, $B$, $C$ are placed clockwise on the plane $\mathbb{R}^2$ (or the $A$-external Feuerbach point of $ABC$ if $A$, $B$, $C$ are placed anti-clockwise on $\mathbb{R}^2$).

\item \text{Euler reflection point} $E$, $E(A, B, C)$ -- the Euler Reflection point of $ABC$.

\item \text{Inner(outer) Apollonian point} $A_p$, $A_p(A, B, C)$ -- the $A$-vertex of inner Apollonian triangle of $ABC$ if $A$, $B$, $C$ are placed clockwise on the plane $\mathbb{R}^2$ (or the $A$ -- vertex of outer Apollonian triangle of $ABC$ if $A$, $B$, $C$ are placed anti-clockwise on $\mathbb{R}^2$).

\item \text{Inner(outer) Morley point} $M$, $M(A, B, C)$ -- the $A$-vertex of inner Morley triangle of $ABC$ if $A$, $B$, $C$ are placed clockwise on the plane $\mathbb{R}^2$ (or the $A$-vertex of outer Morley triangle of $ABC$ if $A$, $B$, $C$ are placed anti-clockwise on $\mathbb{R}^2$).

\item \text{Isogonal point} $Iso$, $Iso(A, B, C, D)$ -- the Isogonal conjugation of $D$ wrt $ABC$.

\item Other similar triangular centers and lines.

\end{enumerate}

\end{lis}

\begin{lis}\label{ltf}

The list of $\t$-functions

\begin{enumerate}
\item $f_{\triangle, 1} = $ the set of all $\t$-points ($f_{\triangle, 1}$ has arity $0$ and, thus, is a set of $\t$-points. Same can be said about $f_{\triangle, i}$, $1\leq i\leq 8$).
\item $f_{\triangle, 2} = \{x\text{ is a }\t\text{-point} \:\vert\: x_{bc} = x_{cb}, x_{ca} = x_{ac}, x_{ab} = x_{ba}\}$.
\item $f_{\triangle, 3} = \left\{x\text{ is a }\t\text{-point} \left\vert\begin{array}{cl} x_{bc} = x_{cb}, x_{ca} = x_{ac}, x_{ab} = x_{ba}\hfill\hfill\\\text{and the triangle }x_{bc}x_{ca}x_{ab}\text{ is equilateral}\end{array}\right.\right\}$.
\item $f_{\triangle, 4} = \{x\text{ is a }\t\text{-point}\:\vert\: x_{bc} = x_{ca} = x_{ab}, x_{cb} = x_{ac} = x_{ba}\}$.
\item $f_{\triangle, 5} = \{x\text{ is a }\t\text{-point}\:\vert\: x_{bc} = \ldots = x_{ba}\}$.
    
\item $f_{\triangle, 6} = \{x\text{ is a }\t\text{-point}\:\vert\:\text{points }x_{bc},\ldots, x_{ba}\text{ are collinear}\}$.
    \item $f_{\triangle, 7} = \{x\text{ is a }\t\text{-point}\:\vert\: x_{bc},\ldots, x_{ba}\text{ lie on a conic}\}$.
    \item $f_{\triangle, 8} = \{x\text{ is a }\t\text{-point}\:\vert\: x_{bc},\ldots, x_{ba}\text{ lie on a circle}\}$.
    \item $f_{\triangle, 9}( x) = \{y\text{ is a }\t\text{-point}\:\vert\: \text{lines }x_{bc}y_{bc}, \ldots, x_{ba}y_{ba}\text{ are concurrent}\}$.
\item $f_{\triangle, 10}( x) = \left\{y\text{ is a }\t\text{-point}\left\vert\begin{array}{cl} \text{the midpoints of segments}\hfill\hfill\\x_{bc}y_{bc}, \ldots, x_{ba}y_{ba}\text{ are collinear}\hfill\hfill\end{array}\right.\right\}$.

\item $f_{\triangle, 11}( x) = \{y\text{ is a }\t\text{-point}\:\vert\: y_{bc} = x_{bc},\ldots, y_{ba} = x_{ba}\text{ i.e. }y = x\}$.

\item $f_{\triangle, 12}( x) = \left\{y\text{ is a }\t\text{-point}\left\vert\begin{array}{cl} y_{bc} = y_{cb}, y_{ca} = y_{ac}, y_{ab} = y_{ba}\text{ and}\hfill\hfill\\\text{the triangle }x_{bc}x_{cb}\cap x_{ca}x_{ac}\cap x_{ab}x_{ba}\\\text{is perspective to }y_{bc}y_{ca}y_{ab}\hfill\hfill\end{array}\right.\right\}$.
    \item $f_{\triangle, 13}( x) = \left\{y\text{ is a }\t\text{-point}\left\vert\begin{array}{cl} \text{---/--- }x_{bc}x_{cb}\cap x_{ca}x_{ac}\cap x_{ab}x_{ba}\\\text{is orthologic to }y_{bc}y_{ca}y_{ab}\hfill\hfill\end{array}\right.\right\}$.

\item $f_{\triangle, 14}( x) = \left\{y\text{ is a }\t\text{-point}\left\vert\begin{array}{cl} y_{bc} = \ldots = y_{ba}\text{ and }y_{bc}\text{ lies on the}\\\text{circumcircle of the triangle}\hfill\hfill\\x_{bc}x_{cb}\cap x_{ca}x_{ac}\cap x_{ab}x_{ba}\hfill\hfill\end{array}\right.\right\}$.

\item $f_{\triangle, 15}( x,  y) = \left\{z\text{ is a }\t\text{-point}\left\vert\begin{array}{cl} z_{bc},\ldots, z_{ba}\text{ coincides with the}\hfill\hfill\\\text{midpoints of }x_{bc}y_{bc},\ldots, x_{ba}y_{ba}\text{ resp.}\hfill\hfill\end{array}\right.\right\}$.
\item $f_{\triangle, 16}( x,  y) = \left\{z\text{ is a }\t\text{-point}\left\vert\begin{array}{cl} z_{bc},\ldots, z_{ba}\text{ lie on the lines}\\x_{bc}y_{bc},\ldots, x_{ba}y_{ba},\text{ resp.}\hfill\hfill\end{array}\right.\right\}$.

\item $f_{\triangle, 17}( x,  y, z) = \left\{t\text{ is a }\t\text{-point}\left\vert\begin{array}{cl} t_{bc},\ldots, t_{ba}\text{ are the projections}\hfill\hfill\\\text{of }x_{bc}, \ldots, x_{ba}\text{ on }y_{bc}z_{bc},\ldots, y_{ba}z_{ba}\end{array}\right.\right\}$.

\item $f_{\triangle, 18}( x,  y, z) = \left\{t\text{ is a }\t\text{-point}\left\vert\begin{array}{cl} t_{bc},\ldots, t_{ba}\text{ are the reflections}\hfill\hfill\\\text{of }x_{bc},\ldots, x_{ba}\text{ wrt }y_{bc}z_{bc},\ldots, y_{ba}z_{ba}\end{array}\right.\right\}$.

\item $f_{\triangle, {19, i}}( x,  y, z) =\left\{t\text{ is a }\t\text{-point}\left\vert\begin{array}{cl} t_{bc} =  X_i(x_{bc}, y_{bc}, z_{bc}), t_{cb} =  X_i(x_{cb}, z_{cb}, y_{cb}),\hfill\hfill\\ t_{ca} =  X_i(z_{ca}, x_{ca}, y_{ca}), t_{ac} =  X_i(y_{ac}, x_{ac}, z_{ac}),\hfill\hfill\\ t_{ab} =  X_i(y_{ab}, z_{ab}, x_{ab}), t_{ba} =  X_i(z_{ba}, y_{ba}, x_{ba})\end{array}\right.\right\}$,\\where $1\leq i\leq 12$, $ X_i$ denotes the $i$-th center from the list \ref{l1}.

\item $f_{\triangle, {20}}( x,  y, z, t) = \left\{v\text{ is a }\t\text{-point}\left\vert\begin{array}{cl} v_{bc} =  x_{bc}y_{bc}\cap z_{bc}t_{bc},\ldots,\\ v_{ba} =  x_{ba}y_{ba}\cap z_{ba}t_{ba}\hfill\hfill\end{array}\right.\right\}$.

\item $f_{\triangle, 21}( x,  y, z, t) = \left\{\begin{array}{cl}v\text{ is a}\\\t\text{-point}\end{array}\left\vert\begin{array}{cl} v_{bc} =  X_{13}(x_{bc}, y_{bc}, z_{bc}, t_{bc}), v_{cb} =  X_{13}(x_{cb}, z_{cb}, y_{cb}, t_{cb}), \hfill\hfill\\v_{ca} =  X_{13}(z_{ca}, x_{ca}, y_{ca}, t_{ca}), v_{ac} =  X_{13}(y_{ac}, x_{ac}, z_{ac}, t_{ac}), \\v_{ab} =  X_{13}(y_{ab}, z_{ab}, x_{ab}, t_{ab}), v_{ba} =  X_{13}(z_{ba}, y_{ba}, x_{ba}, t_{ba})\hfill\hfill\end{array}\right.\right\}$,\\where $ X_{13}$ denote the $13$-th center from the list \ref{l1}.

\item $f_{\triangle, 22}(x, y, z, t) = \left\{v\text{ is a }\t\text{-point}\left\vert\begin{array}{cl}\text{points }v_{bc},\ldots, v_{ba}\text{ lie on the pivotal}\hfill\hfill\\\text{isocubics of triangles}\hfill\hfill\\x_{bc}y_{bc}z_{bc},\ldots, x_{ba}y_{ba}z_{ba}\text{ with pivots}\\t_{bc},\ldots, t_{ba},\text{ respectively}\hfill\hfill\end{array}\right.\right\}$.

\item $f_{\triangle, 23}(x, y, z) = \left\{t\text{ is a }\t\text{-point}\:\vert\: x_{bc}y_{bc}z_{bc}t_{bc},\ldots, x_{ba}y_{ba}z_{ba}t_{ba}\text{ are cyclic}\right\}$.

\item $f_{\triangle, 24}( x, y) = \left\{z\text{ is a }\t\text{-point}\left\vert\begin{array}{cl} z_{bc}\text{ lies on the rectangular hyperbola}\\\text{passing through the vertices of the}\hfill\hfill\\\text{triangle }x_{bc}x_{cb}\cap x_{ca}x_{ac}\cap x_{ab}x_{ba}\hfill\hfill\\\text{and the point }y_{bc},\hfill\hfill\\\text{and similarly for }z_{cb},\ldots, z_{ba}\hfill\hfill\end{array}\right.\right\}$.

\item Functions of the form $f_{\triangle, n, \alpha, \beta, \gamma}(x, y, z) := f_{\triangle, n}(x^{\alpha}, y^{\beta}, z^{\gamma})$, $1\leq n\leq 24$, where $\alpha, \beta, \gamma$ are any symbols from the set $\{bc, cb, ca, ac, ab, ba\}$ and for each $\t$-point $x$,\\$x^{bc} := (x_{bc}, x_{cb}, x_{ca}, x_{ac}, x_{ab}, x_{ba}),\quad x^{ac} := (x_{ac}, x_{ca}, x_{cb}, x_{bc}, x_{ba}, x_{ab}),$\\$x^{cb} := (x_{cb}, x_{bc}, x_{ba}, x_{ab}, x_{ac}, x_{ca}),\quad x^{ba} := (x_{ba}, x_{ab}, x_{ac}, x_{ca}, x_{cb}, x_{bc}),$\\$x^{ab} := (x_{ab}, x_{ba}, x_{bc}, x_{cb}, x_{ca}, x_{ac}),\quad x^{ca} := (x_{ca}, x_{ac}, x_{ab}, x_{ba}, x_{bc}, x_{cb})$,\\ denotes the orbit of $x$.

\item Other similar functions $f_{\triangle, i}$ and, for example, we can consider

$f_{\triangle, i} = \left\{x\text{ is a }\t\text{-point}\left\vert\begin{array}{cl} x_{bc}x_{ca}x_{ab}\text{ is similar}\\\text{(perspective, orthologic) to }x_{cb}x_{ac}x_{ba}\end{array}\right.\right\}$.

\end{enumerate}

\end{lis}

\addcontentsline{toc}{section}{Bibliography}

\bibliographystyle{unsrt}

\end{document}